\newcommand{\x}{X}
\newcommand{\z}{Z}
\newcommand{\A}{\Aa}
\renewcommand{\l}[1]{#1^{(\ell)}}
\renewcommand{\ll}[1]{#1^{(\ell+1)}}
\icmltitlerunning{Risk estimation for matrix recovery with spectral regularization}
\begin{document}

\onecolumn
\icmltitle{Risk estimation for matrix recovery with spectral regularization}

\icmlauthor{Charles-Alban Deledalle, Samuel Vaiter, Gabriel Peyr\'e}{Deledalle@ceremade.dauphine.fr}
\icmladdress{CEREMADE, CNRS, Universit\'{e} Paris-Dauphine, France}
\icmlauthor{Jalal Fadili}{}
\icmladdress{GREYC, CNRS-ENSICAEN-Universit\'{e} de Caen, France}
\icmlauthor{Charles Dossal}{}
\icmladdress{IMB, Universit\'{e} Bordeaux 1, France}

\icmlkeywords{Risk estimation, SURE, matrix recovery, matrix completion, spectral regularization, nuclear norm, proximal algorithms}

\vskip 0.3in

\begin{abstract}
In this paper, we develop an approach to recursively estimate the quadratic risk for matrix recovery problems regularized with spectral functions. Toward this end, in the spirit of the SURE theory, a key step is to compute the (weak) derivative and divergence of a solution with respect to the observations. As such a solution is not available in closed form, but rather through a proximal splitting algorithm, we propose to recursively compute the divergence from the sequence of iterates. A second challenge that we unlocked is the computation of the (weak) derivative of the proximity operator of a spectral function. To show the potential applicability of our approach, we exemplify it on a matrix completion problem  to objectively and automatically select the regularization parameter.
\end{abstract}

\section{Introduction}
\label{sec:introduction}

Consider the problem of estimating a matrix $\x_0 \in \RR^{n_1 \times n_2}$
from $P$ noisy observations
$y = \A(\x_0) + w \in \RR^{P}$,
where $w \sim \Nn(0, \sigma^2 \Id_P)$.
The linear bounded operator $\A: \RR^{n_1 \times n_2} \to \RR^P$ entails loss of information such that the problem is ill-posed.
This problem arises in various research fields.
Because of ill-posedness, side information through a regularizing term is necessary. We thus consider the problem
\begin{align}\label{eq:optim}
  \x(y) \in \uArgmin{\x \in \RR^{n_1 \times n_2}} \frac{1}{2}\|y - \A(\x)\|^2 + \lambda J(\x)
\end{align}
where the set of minimizers is assumed non-empty, $\la>0$ is a regularization parameter and $J: \RR^{n_1 \times n_2} \to \RR \cup \{\infty\}$ is a proper lower semi-continuous (lsc) convex regularizing function that imposes the desired structure on $\x(y)$.
In this paper, we focus on the case where $J$ is a convex spectral function, that is a symmetric convex function of the singular values of its argument. Spectral regularization can account for prior knowledge on the spectrum of $\x_0$, typically low-rank \citep[see e.g.][]{Fazel02}.

In practice, the choice of the regularization parameter $\lambda$ in \eqref{eq:optim} remains an important problem largely unexplored. Typically, we want to select $\la$ minimizing the quadratic risk $\EE_w\norm{\x(y) - \x_0}^2$.
Since $\x_0$ is unknown and $\x(y)$ is non-unique, one can instead consider an unbiased estimate of the \emph{prediction} risk $\EE_w \norm{\A(\x(y)) - \A(\x_0)}^2$, where it can be easily shown that $\mu(y) = \A(\x(y))$ is a single-valued mapping. With the proviso that $\mu(y)$ is weakly differentiable, the SURE \citep[for Stein unbiased risk estimator,][]{stein1981estimation}
\begin{align}\label{eq:sure}
  \SURE(y) =
  \norm{y - \mu(y))}^2
  \!-\! P \sigma^2
  \!+\! 2 \sigma^2 \diverg \mu(y)
\end{align}
is an unbiased estimate of the prediction risk,
where $\diverg \mu(y)\!=\!\Tr\pa{\!\partial \mu(y)}$, and $\partial \mu(y)$ stands for the (weak) Jacobian of $\mu(y)$.
The SURE depends solely on $y$, without prior
knowledge of $\x_0$ and then can prove very useful as a basis
for automatic ways to choose the regularization parameters $\lambda$.

\paragraph{Contributions.}
Our main contribution is to provide the derivative of matrix-valued spectral functions where the matrices have distinct singular values which extends the result of \citet{lewis-twice-spectral} to non-symmetric square matrices. This result is used to recursively compute the derivative of any solution of spectrally regularized inverse problems by solving \eqref{eq:optim}. This is achieved by computing the derivatives of the iterates provided by a proximal splitting algorithm. In particular, this provides an estimate of $\diverg \mu(y)$ in \eqref{eq:sure} which allows to compute $\SURE(y)$. A Numerical example on a matrix completion problem is given to support our findings.

\section{Recursive risk estimation}\label{sec:prox}

\paragraph{Proximal splitting}
Proximal splitting algorithms have become extremely popular to solve non-smooth convex optimization problems that arise often in inverse problems, e.g.~\eqref{eq:optim}. These algorithms provide a sequence of iterates $\l{\x}(y)$ that provably converges to a solution $\x(y)$. A practical way to compute $\diverg \mu(y)$, hence $\SURE(y)$, as initiated by \citet{vonesch-sure}, and that we pursue here, consists in differentiating this sequence of iterates.
This methodology has been extended to a wide class of proximal splitting schemes in \citep{deledalle-2012-670213}.
For the sake of clarity, and without loss of generality, we focus on the case of the forward-backward (FB) splitting algorithm \citep{CombettesWajs05}.

The FB scheme is a good candidate to solve \eqref{eq:optim} if $J$ is simple, meaning that its proximity operator has a closed-form. Recall that the proximity operator of a lsc proper convex function $G$ on $\RR^{n_1 \times n_2}$ is
\begin{align*}
  \Prox_{G}(\x) = \uargmin{\z \in \RR^{n_1 \times n_2}} \frac{1}{2}\norm{\x-\z}_F^2 + G(\z).
\end{align*}
The FB algorithm iteration reads
\begin{align}\label{eq:fb}
  \ll{\x} \!=\! \Prox_{\tau \la J}(\l{\x} + \tau \A^*(y - \A(\l{\x})))
\end{align}
where $\A^*$ denotes the adjoint operator of $\A$, $\tau>0$ is chosen such that $\tau \norm{\A^* \A} < 2$, the dependency of the iterate $\l{\x}$ to $y$ is dropped to lighten the notation.

\paragraph{Risk estimation}
The divergence term $\diverg \mu(y)$ is obtained by differentiating formula \eqref{eq:fb}, which allows, for any vector $\delta \in \RR^P$ to compute iteratively $\l{\xi} = \partial \l{\x}(y)[\de]$ (the derivative of $y \mapsto \l{\x}(y)$ at $y$ in the direction $\de$) as
\begin{align*}
  & \ll{\xi} =
  \partial \Prox_{\tau \la J}(\l{\Xi})[\l{\zeta}]\\
  \text{where} \quad
  &
  \l{\Xi} = \l{\x} + \tau \A^*(y - \A(\l{\x}))\\
  \text{and} \quad
  &
  \l{\zeta} =
  \l{\xi} + \tau \A^*(\delta - \A(\l{\xi})).
\end{align*}
Using the Jacobian trace formula of the divergence,
it can be easily seen that
\begin{align}
  \label{eq-randomized-df}
  \small
  \diverg \mu(y) = \EE_\de \dotp{\partial \mu(y)[\de]}{\de} \approx \frac{1}{k} \sum_{i = 1}^k \dotp{\partial \mu(y) [\de_i]}{\de_i}
\end{align}%
where $\de \sim \Nn(0,\Id_P)$ and $\de_i$ are $k$ realizations of $\de$.
The $\SURE(y)$ can in turn be iteratively estimated by plugging
$\partial \mu(y)[\de_i] = \A(\partial \l{\x}(y)[\de_i])$ in \eqref{eq-randomized-df}.

\section{Local behavior of spectral functions}
\label{sec:sf}

This section studies the local behavior of real- and matrix-valued spectral functions. We write the (full) singular value decomposition (SVD) of a matrix $\x \in \RR^{n_1 \times n_2}$
\eq{
	\x = V_\x \diag(\La_\x) U_\x^*
}
(which might not be in general unique), where $\La_\x \in \RR^{n}$
is the vector of singular values of $\x$ with $n = \min (n_1, n_2)$,
$\diag(\La_\x) \in \RR^{n_1 \times n_2}$
denotes the rectangular matrix with entries $\La_\x$ on its main diagonal and 0 otherwise, 
and $V_\x \in \RR^{n_1 \times n_1}$ and $U_\x \in \RR^{n_2 \times n_2}$ are the unitary matrices of left and right singular vectors.

\subsection{Scalar-valued Spectral Functions}

A real-valued spectral function $J$ can by definition be written as
\begin{align}\label{eq:rsf}
  J(\x) = \phi(\La_\x)
\end{align}
where $\phi : \RR^n \rightarrow \RR$ is a symmetric function of its argument,
meaning $\phi(P \La)=\phi(\La)$ for any permutation matrix $P \in \RR^{n \times n}$ and $\Lambda$ in the domain of $\phi$. We extend $\phi$ to the negative half-line as $\phi(\La)=\phi(|\La|)$.

We then consider $J$ a scalar-valued spectral function as defined in \eqref{eq:rsf}. From subdifferential calculus on spectral functions \citet{lewis-convex-unitarily}, we get the following.
\begin{prop}\label{prop-subdiff-spectral}
  A spectral function $J(\x)=\phi(\La_\x)$ is convex
  if and only if $\phi$ is convex, and then
  \eq{
    \forall \ga>0, \quad \Prox_{\ga J}(\x) = V_\x \diag( \Prox_{\ga \phi}(\La_\x) ) U_\x^*.
  }
\end{prop}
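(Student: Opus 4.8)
The plan is to treat the two assertions separately. The equivalence between convexity of $J$ and convexity of $\phi$ is exactly the content of the spectral convexity theory of \citet{lewis-convex-unitarily}: since $J(\x)=\phi(\La_\x)$ depends on $\x$ only through its singular values, it is an orthogonally invariant function, and Lewis's characterization states that such a function is convex if and only if its underlying symmetric function $\phi$ is convex. I would simply invoke this result, so the real work is the proximity operator formula, which I would establish by a direct variational argument.

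For the prox formula, fix $\x$ with full SVD $\x = V_\x \diag(\La_\x) U_\x^*$ and consider the objective $F(\z)=\tfrac12\norm{\x-\z}_F^2 + \ga\phi(\La_\z)$ whose minimizer defines $\Prox_{\ga J}(\x)$. The first step is to expand the Frobenius term as $\tfrac12\norm{\La_\x}^2 + \tfrac12\norm{\La_\z}^2 - \Tr(\x^*\z)$ and to control the cross term by von Neumann's trace inequality, $\Tr(\x^*\z)\le \dotp{\La_\x}{\La_\z}$ (both singular value vectors sorted in decreasing order). This yields the lower bound
\[
  F(\z) \ge \tfrac12\norm{\La_\x-\La_\z}^2 + \ga\phi(\La_\z),
\]
whose right-hand side depends on $\z$ only through its singular value vector $\La_\z$ and is precisely the scalar objective defining $\Prox_{\ga\phi}(\La_\x)$. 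Minimizing this lower bound therefore reduces the matrix problem to the vector prox $s^\star=\Prox_{\ga\phi}(\La_\x)$.

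The second step is achievability: I would set $\z^\star = V_\x \diag(s^\star) U_\x^*$, so that $\z^\star$ shares its singular subspaces with $\x$ and hence makes von Neumann's inequality an equality, giving $F(\z^\star)=\tfrac12\norm{\La_\x - s^\star}^2 + \ga\phi(s^\star)$, the minimum of the lower bound. Since $F(\z)$ is everywhere at least this lower bound, $\z^\star$ is a global minimizer, and by strong convexity of $F$ (inherited from the quadratic term) it is the unique one, whence $\Prox_{\ga J}(\x)=\z^\star$.

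The hard part will be justifying the legitimacy of the construction $\z^\star = V_\x \diag(s^\star) U_\x^*$: for this matrix to genuinely admit $s^\star$ as its singular value vector, the components of $s^\star$ must be nonnegative and ordered compatibly with $\La_\x$, and only then does the equality case of von Neumann's inequality truly apply. Both properties follow from symmetry and evenness of $\phi$ together with $\La_\x$ being nonnegative and sorted. Concretely, since $\phi(\La)=\phi(|\La|)$ and $\La_\x \ge 0$, reflecting any negative coordinate of a candidate $s$ to its absolute value leaves $\phi$ unchanged while not increasing $\tfrac12\norm{\La_\x - s}^2$, so $s^\star \ge 0$; and since $\phi$ is permutation invariant while $\La_\x$ is sorted, a rearrangement inequality applied to the cross term $-\dotp{\La_\x}{s}$ shows that the sorted ordering is optimal, so $s^\star$ is a valid singular value vector. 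Handling these two sign/monotonicity arguments carefully is where the proof requires the most attention.
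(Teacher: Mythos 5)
Your proof is correct, but note that the paper does not actually prove Proposition~\ref{prop-subdiff-spectral}: it is stated as a direct consequence of the theory of convex functions of singular values in \citet{lewis-convex-unitarily}, with no argument supplied. What you have written is a self-contained reconstruction of the mechanism behind that theory. The engine is von Neumann's trace inequality $\Tr(\x^*\z)\le\dotp{\La_\x}{\La_\z}$, which yields the lower bound $F(\z)\ge\tfrac12\norm{\La_\x-\La_\z}^2+\ga\phi(\La_\z)$ reducing the matrix problem to the vector prox, while its equality case (attained when $\z$ shares an aligned SVD with $\x$) gives achievability. You correctly identify and resolve the two delicate points that are easy to gloss over: that $s^\star=\Prox_{\ga\phi}(\La_\x)$ is nonnegative (from the extension $\phi(\La)=\phi(|\La|)$ and $\La_\x\ge 0$, since reflecting a negative coordinate cannot increase the objective) and that it is sorted consistently with $\La_\x$ (from permutation invariance of $\phi$ and the rearrangement inequality), combined with uniqueness of the minimizer by strong convexity; these are exactly what legitimize $\z^\star=V_\x\diag(s^\star)U_\x^*$ as a genuine SVD and hence the equality case of von Neumann. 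Delegating the equivalence ``$J$ convex $\iff$ $\phi$ convex'' to the cited reference is reasonable (the ``only if'' direction is in any case immediate by restricting $J$ to diagonal matrices). The trade-off is the expected one: the paper's citation is shorter and inherits the general statement, whereas your argument is verifiable without consulting the reference and makes explicit where the permutation invariance and absolute-value extension of $\phi$ are actually used.
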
%

\subsection{Matrix-valued Spectral Functions}

We now turn to matrix-valued spectral functions
\eql{\label{eq-matrix-valued}
	F(\x) = V_\x \diag( \Phi(\La_\x) ) U_\x^*,
}
where $\Phi : \RR^n \rightarrow \RR^n$ is symmetric in its arguments, meaning $\Phi \circ P = P \circ \Phi$ for any permutation matrix $P \in \RR^{n \times n}$.
We extend $\Phi$ to negative numbers as
$\Phi(\La)=\sign(\La) \odot \Phi(|\La|)$ and
$\odot$ is the entry-wise matrix multiplication.
One can observe that for $F(\x) = \Prox_{\ga J}(\x)$ with
$\Phi = \Prox_{\ga \phi}$, the proximity operator of a convex
scalar-valued spectral function is a matrix-valued spectral function.

The following theorem provides a closed-form expression of the derivative of $F$ when $X$ is square, i.e. $n_1=n_2=n$, with distinct singular values.

\begin{thm}\label{prop:diff_msf}
  For any matrix-valued spectral function $F$ in \eqref{eq-matrix-valued}, let the quantity
  \begin{align*}
    & \foralls \de \in \RR^{n_1 \times n_2}, \quad
    D(\x)[\de] =
    V_\x
    (\Mm (\La_\x) [ \bar \de ]
    + \Ga_S(\La_\x) \odot \Pp_S(\bar \de)
    + \Ga_A(\La_\x) \odot \Pp_A(\bar \de))
    U_\x^*
  \end{align*}
  where $\bar \de = V_X^* \de U_X \in \RR^{n_1 \times n_2}$,
  the symmetric and anti-symmetric parts are defined,
  for $1 \le i \le n_1$ and $1 \le j \le n_2$, as
  \begin{align*}
    &\Pp_S(Y)_{i,j} =
    \choice{
      \frac{Y_{i,j}+Y_{j,i}}{2} &
      \text{if} \quad i \le n \qandq j \le n\\
      \frac{Y_{i,j}}{2} &
      \text{otherwise}
    }\\
    \qandq
    &
    \Pp_A(Y)_{i,j} =
    \choice{
      \frac{Y_{i,j}-Y_{j,i}}{2} &
      \text{if} \quad i \le n \qandq j \le n\\
      \frac{Y_{i,j}}{2} &
      \text{otherwise}
    }
  \end{align*}
  and $\Mm(\La) : \RR^{n_1 \times n_2} \mapsto \RR^{n_1 \times n_2}$ is
  \eq{
    \Mm(\La) = \diag \circ \partial \Phi(\La) \circ \diag.
  }
  The matrices $\Ga_S(\La)$ and $\Ga_A(\La)$ are defined,
  for all $1 \leq i \leq n_1$ and $1 \leq j \leq n_2$, as
  \begin{align*}
    \Ga_S(\La)_{i,j} &=
    \choice{
      0 & \text{if} \quad i=j\\
      \frac{\Phi(\La)_i-\Phi(\La)_j}{\La_i-\La_j}
      & \text{if} \quad \La_i \neq \La_j \\
      \partial \Phi(\La)_{i,i} - \partial \Phi(\La)_{i,j}
      & \text{otherwise},
    } \\
    \Ga_A(\La)_{i,j} &=
    \choice{
      0 & \text{if} \quad i=j\\
      \frac{\Phi(\La)_i+\Phi(\La)_j}{\La_i+\La_j}
      & \text{if} \quad \La_i > 0 \quad \text{or} \quad \La_j > 0\\
      \partial \Phi(\La)_{i,i} - \partial \Phi(\La)_{i,j}
      & \text{otherwise}.
    }
  \end{align*}
  where for $i > n$ we have extended $\La$ and $\Phi(\La)$ as
  $\La_i = 0$ and $\Phi(\La)_i = 0$.\\
  Assume that $\x$ is a square matrix, i.e. $n_1 = n_2 = n$,
  and with distinct singular values, such that $\La_i \ne \La_j$
  for all $i \ne j$.
  Then, a matrix-valued spectral function $F$ is differentiable at $\x$
  if and only if $\Phi$ is differentiable at $\La_\x$.
  Moreover,
 \begin{align*}
    & \foralls \de \in \RR^{n \times n}, \quad
    \partial F(\x)[\de] = D(\x)[\de]
 \end{align*}
\end{thm}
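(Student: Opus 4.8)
The plan is to reduce everything to first-order perturbation theory of the SVD, which is legitimate here precisely because $\x$ is square and has \emph{distinct} singular values. First I would set up a smooth local SVD: distinctness of the $\La_i$ makes the eigenvalues $\La_i^2$ of $\x^*\x$ simple, so standard analytic perturbation theory (the implicit function theorem applied to $\x^*\x$) yields smooth local branches $\z \mapsto (V_\z, \La_\z, U_\z)$ on a neighbourhood of $\x$, unique up to a global sign choice for each matched pair of columns. Along such a branch $F(\z) = V_\z \diag(\Phi(\La_\z)) U_\z^*$ is the composition of smooth maps with $\Phi$, so $F$ is differentiable at $\x$ whenever $\Phi$ is differentiable at $\La_\x$; conversely, restricting to the curve $t \mapsto V_\x \diag(\La_\x + t h) U_\x^*$ with fixed singular vectors reduces $F$ to $t \mapsto V_\x \diag(\Phi(\La_\x + t h)) U_\x^*$, so differentiability of $F$ forces that of $\Phi$ at $\La_\x$ in every direction $h$. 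This settles the equivalence; when some $\La_k = 0$ the left singular vector is taken from the simple eigenvector of $\x\x^*$ and the same branch argument applies.

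Next I would linearise the SVD. Writing $\dot{(\cdot)}$ for the derivative along $\de$ and setting $A = V_\x^* \dot V_\x$ and $B = U_\x^* \dot U_\x$, differentiation of $V_\x^* V_\x = \Id$ and $U_\x^* U_\x = \Id$ shows $A,B$ are skew-symmetric, and differentiating $\x = V_\x \diag(\La_\x) U_\x^*$ then conjugating by $(V_\x^*, U_\x)$ gives
\[
  \bar\de = A L - L B + \diag(\dot\La), \qquad L := \diag(\La_\x) .
\]
The diagonal yields $\dot\La_i = \bar\de_{i,i}$, hence the diagonal block of the derivative is $\diag(\partial \Phi(\La)\,\dot\La) = \Mm(\La)[\bar\de]$. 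For $i \ne j$, pairing the $(i,j)$ and $(j,i)$ entries gives the coupled system
\[
  \begin{pmatrix} \bar\de_{i,j} \\ \bar\de_{j,i} \end{pmatrix}
  =
  \begin{pmatrix} \La_j & -\La_i \\ -\La_i & \La_j \end{pmatrix}
  \begin{pmatrix} A_{i,j} \\ B_{i,j} \end{pmatrix},
\]
whose determinant $\La_j^2 - \La_i^2$ is nonzero exactly because the singular values are distinct and at most one of them vanishes; inverting it expresses $A_{i,j}, B_{i,j}$ linearly in $\bar\de_{i,j}, \bar\de_{j,i}$.

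Finally I would assemble $\partial F$. Differentiating $F$ gives $\partial F(\x)[\de] = V_\x\,(A\Psi - \Psi B + \dot\Psi)\,U_\x^*$ with $\Psi = \diag(\Phi(\La_\x))$, where $\dot\Psi$ is the $\Mm$-block above. For $i \ne j$ the $(i,j)$ entry of $A\Psi - \Psi B$ equals $\Phi(\La)_j A_{i,j} - \Phi(\La)_i B_{i,j}$; substituting the solved $A_{i,j}, B_{i,j}$ and rewriting $\bar\de_{i,j} = \Pp_S(\bar\de)_{i,j} + \Pp_A(\bar\de)_{i,j}$ and $\bar\de_{j,i} = \Pp_S(\bar\de)_{i,j} - \Pp_A(\bar\de)_{i,j}$, the coefficient of $\Pp_S$ collapses, through the factorization $\La_j^2 - \La_i^2 = (\La_j - \La_i)(\La_j + \La_i)$, to the difference quotient $\tfrac{\Phi(\La)_i - \Phi(\La)_j}{\La_i - \La_j} = \Ga_S(\La)_{i,j}$, and the coefficient of $\Pp_A$ to the sum quotient $\tfrac{\Phi(\La)_i + \Phi(\La)_j}{\La_i + \La_j} = \Ga_A(\La)_{i,j}$. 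Under distinctness these are exactly the active branches of $\Ga_S$ and $\Ga_A$ (the \emph{otherwise} cases never occur), which reproduces $D(\x)[\de]$.

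\textbf{Main obstacle.} The delicate point is the off-diagonal block. In the symmetric eigenvalue setting a single rotation produces only the difference quotient $\Ga_S$; here the left and right rotations $A$ and $B$ are coupled, and it is the inversion of the $2\times 2$ system that manufactures the \emph{second}, antisymmetric, quotient $\Ga_A$ carrying the sum $\La_i + \La_j$ in its denominator. Conceptually $\Ga_A$ is the difference quotient of the odd extension of $\Phi$ evaluated at $\La_i$ and $-\La_j$, mirroring the fact that the SVD of $\x$ is the symmetric eigendecomposition of the dilation $\left(\begin{smallmatrix} 0 & \x \\ \x^* & 0 \end{smallmatrix}\right)$, whose spectrum is $\{\pm \La_i\}$; this is the precise sense in which the statement extends the symmetric result of \citet{lewis-twice-spectral}. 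A last verification, which I would not expect to be hard, is that $D(\x)[\de]$ is invariant under the residual sign freedom of the SVD branch (simultaneously flipping the $k$-th columns of $V_\x$ and $U_\x$), so that the formula is intrinsic.
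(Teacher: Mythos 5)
Your proposal is correct and follows essentially the same route as the paper: a smooth local SVD branch obtained via the implicit function theorem, skew-symmetry of $\iota_V=V_\x^*\dot V_\x$ and $\iota_U=U_\x^*\dot U_\x$, the coupled $2\times 2$ systems with determinant $\La_j^2-\La_i^2$, and the final split of the off-diagonal block into symmetric and antisymmetric parts yielding $\Ga_S$ and $\Ga_A$. The only differences are cosmetic (you perturb $\x^*\x$ where the paper applies the implicit function theorem directly to the defining equations of the SVD), and your explicit treatment of the ``only if'' direction and of the sign-flip invariance slightly amplifies points the paper leaves implicit.
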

The proof is given in Appendix \ref{sec:proof}.

Theorem~\ref{prop:diff_msf} generalizes the result of \citet{lewis-twice-spectral} to square matrices that are not necessarily symmetric, and we recover their formula when $\x$ and $\de$ are symmetric matrices and $\x$ has distinct singular values. Regularity properties and expression of the directional derivative of symmetric matrix-valued separable spectral functions (possibly non-smooth) over non-necessarily symmetric matrices were also derived in \citet{Sun02}. Before revising the previous version of this manuscript, \citet{Candes12} brought to our attention their recent work on the SURE framework for parameter selection in denoising low-rank matrix data. Towards this goal, they provided closed-form expressions for the directional derivative and divergence of matrix-valued spectral functions over rectangular matrices with distinct singular values. They also addressed the case of complex-valued matrices.

Although our proof of Theorem~\ref{prop:diff_msf} is rigorously valid only for square matrices with distinct singular values, we conjecture that the formula of the directional derivative holds for rectangular matrices with repeated singular values. For the symmetric case with repeated eigenvalues, this assertion was formally proved
in \citet{lewis-twice-spectral}. As stated above, the full-rank rectangular case with distinct singular values was proved in \citet{Candes12}, where it was also shown that the divergence formula has a continuous extension to all matrices.

\section{Numerical applications}\label{sec:nnr}

\subsection{Nuclear norm regularization}\label{sec:numeric}

We here consider the problem of recovering a low-rank matrix $\x_0 \in \RR^{n_1 \times n_2}$.
To this end, $J$ is taken as the nuclear norm (a.k.a., trace or Schatten $1$-norm) which is in some sense the tightest convex relaxation to the NP-hard rank minimization problem \cite{CandesCompletion08}. The nuclear norm is defined by
\begin{align}\label{eq:nuclear_norm}
  J(\x) = \norm{\x}_* \triangleq \norm{\La_\x}_1 .
\end{align}
Taking $J(\cdot)$ as $\norm{\cdot}_*$ and $\phi$ as $\norm{.}_1$ in Proposition~\ref{prop-subdiff-spectral} gives:
\begin{cor}\label{cor:diff_msf}
The proximal operator of $\ga\norm{\cdot}_*$ is
\begin{align}\label{eq:prox_nuclear}
  \forall \ga>0, \quad
  \Prox_{\ga \norm{\cdot}_*}(\x) = V_\x \diag( T_\ga(\La_\x) ) U_\x^* ~,
\end{align}
where $T_{\ga} = \Prox_{\ga \norm{.}_1}$ is the component-wise soft-thresholding, defined for $i=1,\ldots,n$ as
\begin{align*}
  T_{\ga}(t)_i = \max(0,1-\ga/\norm{t_i})t_i.
\end{align*}
\end{cor}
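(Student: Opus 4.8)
The plan is to apply Proposition~\ref{prop-subdiff-spectral} directly with the choice $\phi = \norm{\cdot}_1$. First I would check its hypotheses: the $\ell_1$-norm on $\RR^n$ is convex and invariant under coordinate permutations, hence symmetric in the required sense, and it satisfies the sign-symmetry extension $\phi(\La) = \phi(|\La|)$. Consequently the associated spectral function $J(\x) = \phi(\La_\x) = \norm{\La_\x}_1 = \norm{\x}_*$ is convex, and the proposition yields
\eq{
  \forall \ga > 0, \quad
  \Prox_{\ga \norm{\cdot}_*}(\x) = V_\x \diag\pa{\Prox_{\ga \norm{\cdot}_1}(\La_\x)} U_\x^*.
}
The entire statement then reduces to identifying $\Prox_{\ga \norm{\cdot}_1}$ with the component-wise soft-thresholding operator $T_\ga$.

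For this second step I would exploit the separability of the $\ell_1$-norm. Since $\norm{t}_1 = \sum_{i=1}^n |t_i|$ and the squared Euclidean distance in the definition of the proximity operator also splits across coordinates, the minimization defining $\Prox_{\ga \norm{\cdot}_1}$ decouples into $n$ independent scalar problems,
\eq{
  \Prox_{\ga \norm{\cdot}_1}(t)_i = \uargmin{s \in \RR} \tfrac{1}{2}(t_i - s)^2 + \ga |s|.
}
Writing the optimality condition $0 \in s - t_i + \ga \, \partial|s|$ and distinguishing $s \neq 0$ from $s = 0$ gives the classical soft-thresholding solution, which I would then rewrite in the compact form $\max(0, 1 - \ga/|t_i|)\, t_i$ announced in the statement.

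To finish I would verify the two regimes of this expression. When $|t_i| > \ga$ the factor $1 - \ga/|t_i|$ is positive and the formula returns $t_i - \ga \, \sign(t_i)$, i.e.\ the minimizer shrunk toward zero; when $|t_i| \le \ga$ the factor is non-positive, the maximum vanishes, and the minimizer is $0$. This matches the solution of the scalar problem and establishes $\Prox_{\ga \norm{\cdot}_1} = T_\ga$, completing the identification. There is no genuine obstacle here, as all the substance is carried by Proposition~\ref{prop-subdiff-spectral}; the only point deserving a line of care is confirming that the permutation-invariance of $\norm{\cdot}_1$ is precisely the symmetry hypothesis needed to invoke the proposition, so that the reduction to the scalar thresholding is legitimate.
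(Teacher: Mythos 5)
Your proposal is correct and follows exactly the paper's route: the corollary is obtained by specializing Proposition~\ref{prop-subdiff-spectral} to $\phi = \norm{\cdot}_1$, whose permutation- and sign-invariance and convexity are immediate, and then identifying $\Prox_{\ga\norm{\cdot}_1}$ with component-wise soft-thresholding via separability of the scalar problems. The paper states this as a one-line consequence of the proposition; you simply fill in the standard details it leaves implicit.
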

We now turn to the derivative of $F = \Prox_{\ga \norm{\cdot}_*}$. A straightforward attempt is to take $\Phi = \Prox_{\ga \norm{.}_1} = T_{\ga}$ and apply Theorem~\ref{prop:diff_msf} with
\begin{align}\label{eq:diff_soft_thresholding}
  \partial \Phi(\x)[\de]_i =
  \partial T_\ga(t)[\de]_i =
  \choice{
    0 \qifq \norm{t_i} \leq \ga \\
    \de_{i} \quad\text{otherwise}.
  }
\end{align}
However, strictly speaking, Theorem~\ref{prop:diff_msf} does not apply since a proximity mapping is 1-Lipschitz in general, hence not necessarily differentiable everywhere. Thus, its derivative may be set-valued, as is the case for soft-thresholding at $\pm \ga$.

A direct consequence of Corollary \ref{cor:diff_msf} is that $J$ is a simple function allowing for the use of the FB algorithm. Moreover, the expression of the derivative \eqref{eq:diff_soft_thresholding} provides an estimation of the SURE as explained in Section~\ref{sec:prox}.

\subsection{Application to matrix completion}\label{sec:numeric}

We now exemplify the proposed SURE computation approach on a matrix completion problem encountered in recommendation systems such as the popular Netflix problem.
We therefore consider the forward model $y = \A(\x_0) + w \in \RR^P$, $w \sim \Nn(0, \sigma^2 \Id_P)$,
where $\x_0$ is a dense but low-rank (or approximately so) matrix and $A$ is binary masking operator.

We have taken $(n_1,n_2)=(1000,100)$ and $P=25000$ observed entries (i.e., $25\%$).
The underlying dense matrix $\x_0$ has been chosen to be approximately low-rank with a rapidly decaying spectrum
$\La_{\x_0} = \{ k^{-1} \}_{k = 1}^n$.
The standard deviation $\sigma$ has been set such that
the resulting minimum least-square estimate has a relative error $\norm{\x_{LS}-\x_0}_F/\norm{\x_0}_F=0.9$.
Figure~\ref{fig:sure} depicts the prediction risk and
its $\SURE$ estimate as a function of $\la$.
For each value of $\la$ in the tested range,
$\SURE(y)$ in \eqref{eq:sure} has been computed for a single realization of $y$
with $k = 4$ realizations $\de_i$ in \eqref{eq-randomized-df}%
\footnote{Without impacting the optimal choice of $\la$, the two curves have been vertically shifted for visualization.\label{footnote:shift}}.
At the optimal $\lambda$ value, $\x(y)$ has a rank of $55$
with a relative error of $0.46$ (i.e., a gain of about a factor $2$ w.r.t.~the least-square estimator).

\begin{figure}[t]
  \centering
  \includegraphics[width=0.5\linewidth]{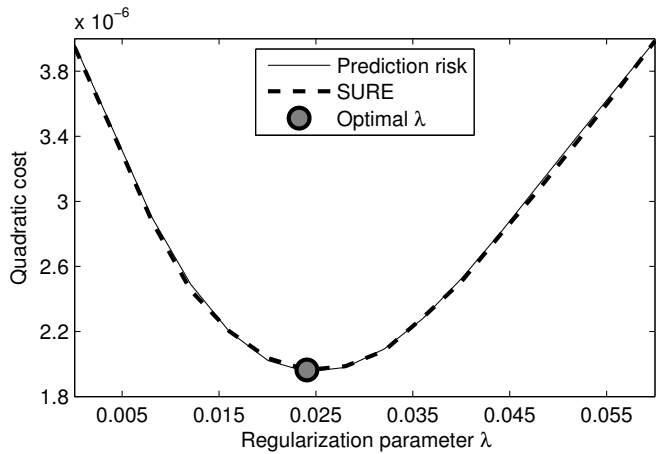}
  \caption{Predicted risk and its $\SURE$ estimate$^{\ref{footnote:shift}}$.}
  \label{fig:sure}
\end{figure}

\section{Conclusion}
\label{sec:conclusion}

The core theoretical contribution of this paper is the derivative of square matrix-valued spectral functions. This was a key step to compute the derivative of the proximal operator associated to the nuclear norm, and finally to use the $\SURE$ to recursively estimate the quadratic prediction risk of matrix recovery problems involving the nuclear norm regularization. The $\SURE$ was also used to automatically select the optimal regularization parameter.

\appendix

\section{Summary of the proof of Theorem~\ref{prop:diff_msf}}\label{sec:proof}

The following lemma derives the expression of the derivative of the SVD mapping $\x \mapsto (V_\x,\La_\x,U_\x)$. Note that this mapping is not well defined because even if the $\La_\x$ are distinct, one can apply arbitrary sign changes and permutations to the set of singular vectors. The lemma should thus be interpreted in the sense that one can locally write a Taylor expansion using the given differential for any particular choice of SVD.
We point out that a proof of this lemma using wedge products can be found in \citep{edelman-handout}, but for the sake of completeness, we provide our own proof here.

\begin{lem}\label{prop-svd-derivative}
	We consider $X_0 \in \RR^{n \times n}$ with distinct singular values.
        For any matrix $X$ in a neighborhood of $X_0$,
        we can define without ambiguity the SVD mapping $X \mapsto (V_X,\La_X,U_X)$
        by sorting the values in $\La_X$ and imposing sign constraints on $U_X$.
        The singular value mapping $X \mapsto (V_X,\La_X,U_X)$ is $C^1$ and
        for a given matrix $\de$, its directional derivative is
	\begin{align*}\label{eq-derivative-singulval}
		\partial \La_X[\de] &= \diag( V_X^* \de U_X )~,\\
		\partial V_X[\de] &= V_X \iota_V ,\\
	        \partial U_X[\de] &= U_X \iota_U
	\end{align*}
        where
        $\iota_V \in \RR^{n \times n}$ and
        $\iota_U \in \RR^{n \times n}$ are defined,
        for all $1 \leq i \leq n$ and $1 \leq j \leq n$, as
	\eql{\label{eq-derivative-singulvect}
		(\iota_V)_{i,j} =
		\frac{{(\La_X)}_j \bar \de_{i,j} + {(\La_X)}_i \bar \de_{j,i}}{{(\La_X)}_j^2-{(\La_X)}_i^2}
		\qandq
		(\iota_U)_{i,j} =
		\frac{{(\La_X)}_i \bar \de_{i,j} + {(\La_X)}_j \bar \de_{j,i}}{{(\La_X)}_j^2-{(\La_X)}_i^2}~.
	}
        and where $\bar \de = V_X^* \de U_X \in \RR^{n \times n}$.
\end{lem}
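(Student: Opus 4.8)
\noindent The plan is to realize the SVD map as the local inverse of the smooth reconstruction map
\[
  \Psi : O(n)\times\RR^n\times O(n)\to\RR^{n\times n},\qquad
  \Psi(V,\La,U)=V\diag(\La)U^*,
\]
and then to obtain the differential by differentiating the defining identity $X=V_X\diag(\La_X)U_X^*$. The only substantial point is the regularity of the SVD map; once that is secured the remaining algebra is elementary.

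First I would establish the $C^1$ regularity through the inverse function theorem applied to $\Psi$ at $(V_{X_0},\La_{X_0},U_{X_0})$, whose value is $X_0$. A tangent vector to the domain has the form $(V_{X_0}A,\,h,\,U_{X_0}B)$ with $A,B$ anti-symmetric and $h\in\RR^n$, so the domain has dimension $\tfrac{n(n-1)}{2}+n+\tfrac{n(n-1)}{2}=n^2=\dim\RR^{n\times n}$. Writing $\Sigma=\diag(\La_{X_0})$, a short computation using $(U_{X_0}B)^*=-BU_{X_0}^*$ gives
\[
  \partial\Psi[(V_{X_0}A,h,U_{X_0}B)]
  = V_{X_0}\big(A\Sigma+\diag(h)-\Sigma B\big)U_{X_0}^*.
\]
I would then check that $\partial\Psi$ is injective: the diagonal of $A\Sigma+\diag(h)-\Sigma B$ is exactly $h$ (the anti-symmetric terms vanish on the diagonal), while each off-diagonal pair $(i,j),(j,i)$ obeys a $2\times2$ system of determinant $\La_j^2-\La_i^2\neq0$, forcing $A=B=0$ and $h=0$. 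Equal dimensions then make $\partial\Psi$ an isomorphism, so $\Psi$ is a local diffeomorphism. Since distinct singular values leave only the discrete sign ambiguity on paired singular vectors, sorting $\La_X$ and imposing the sign constraints on $U_X$ selects a single smooth branch of $\Psi^{-1}$, which is the claimed $C^1$ (indeed $C^\infty$) SVD map.

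With regularity in hand, I would differentiate $X=V_X\diag(\La_X)U_X^*$ in the direction $\de$ (writing $\La_i:=(\La_X)_i$). Orthogonality $V_X^*V_X=U_X^*U_X=\Id$ forces $\iota_V:=V_X^*\partial V_X[\de]$ and $\iota_U:=U_X^*\partial U_X[\de]$ to be anti-symmetric, so $\partial V_X[\de]=V_X\iota_V$ and $\partial U_X[\de]=U_X\iota_U$. Left-multiplying the differentiated relation by $V_X^*$, right-multiplying by $U_X$, and writing $\bar\de=V_X^*\de U_X$ and $\Sigma=\diag(\La_X)$ collapses everything to the single identity
\[
  \bar\de=\iota_V\Sigma+\diag(\partial\La_X[\de])-\Sigma\,\iota_U.
\]
Its diagonal entries yield $\partial\La_X[\de]=\diag(\bar\de)=\diag(V_X^*\de U_X)$, the first asserted formula. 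For $i\neq j$ the $(i,j)$ and $(j,i)$ entries give
\[
  \La_j(\iota_V)_{ij}-\La_i(\iota_U)_{ij}=\bar\de_{ij},
  \qquad
  -\La_i(\iota_V)_{ij}+\La_j(\iota_U)_{ij}=\bar\de_{ji},
\]
a system of determinant $\La_j^2-\La_i^2\neq0$; Cramer's rule returns precisely the stated expressions for $(\iota_V)_{ij}$ and $(\iota_U)_{ij}$, while the diagonal entries vanish by anti-symmetry.

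The main obstacle is the regularity step rather than the algebra: one must guarantee that the SVD map is genuinely single-valued and differentiable. This is exactly where the distinct-singular-value hypothesis enters — it kills the continuous gauge freedom $V\mapsto VR$, $U\mapsto UR$ with $R$ commuting with $\Sigma$, leaving only the discrete sign flips removed by the imposed constraints, and it is precisely what makes every block determinant $\La_j^2-\La_i^2$ nonzero. An alternative route to the same regularity is to diagonalize the symmetric matrices $X^*X$ and $XX^*$, whose eigenvalues $\La_i^2$ are distinct, and to invoke analytic perturbation theory for symmetric eigenproblems. A residual delicacy is a vanishing singular value: if some $\La_i=0$ the differential of $\Psi$ stays invertible, but the nonnegativity convention can break genuine differentiability of that singular value, so there the statement is read on a fixed signed branch, in agreement with the Taylor-expansion interpretation announced before the lemma.
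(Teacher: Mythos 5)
Your proof is correct and follows essentially the same route as the paper: the paper applies the implicit function theorem to $\psi(X,(V,U,\La))=(X-V\diag(\La)U^*,\,V^*V-\Id,\,U^*U-\Id)$, which linearizes to exactly the same anti-symmetry conditions and $2\times2$ systems with determinant $\La_j^2-\La_i^2$ that you obtain by inverting the reconstruction map on $O(n)\times\RR^n\times O(n)$. The only differences are cosmetic (inverse vs.\ implicit function theorem, tangent-space dimension count vs.\ explicit constraint equations), and your closing remarks on the zero-singular-value branch are consistent with the paper's stated Taylor-expansion interpretation of the lemma.
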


\begin{proof}
  Let $S_n$ be the sub-space of
  Hermitian matrix in $\RR^{n \times n}$.
  Let $\psi : \RR^{n \times n} \times \Yy \rightarrow \RR^{n \times n} \times S_{n} \times S_{n}$, where
  $\Yy = \RR^{n \times n} \times \RR^{n \times n} \times \RR^{n}$, be defined for
  $Y = (V, U, \La) \in \Yy$ as
  \begin{align*}
    \psi(X, Y) =
    \left(\begin{array}{ccc}
      X - V \diag(\La) U^*, &
      V^* V - \Id, &
      U^* U - \Id
    \end{array}\right).
  \end{align*}
  We have for any vector $\ze_X \in \RR^{n \times n}$
  \begin{align}
    \label{eq-proof-svd-diff1}
    \partial_1 \psi(X, Y) [\ze_X]
    =
    \left(\begin{array}{ccc}
      \ze_X, &
      0, &
      0
    \end{array}\right)
  \end{align}
  and for any vector $\ze_Y = (\ze_U, \ze_V, \ze_\La) \in \Yy$
  \begin{align*}
    \partial_2 \psi(X, Y) [\ze_Y]
    =
    \left(\begin{array}{ccc}
      -\ze_V \diag(\La) U^* - V \diag(\La) \ze_U^* - V \diag(\ze_\La) U^*, &
      V^* \ze_V + \ze_V^* V, &
      U^* \ze_U + \ze_U^* U
    \end{array}\right).
  \end{align*}
  Let $X_0$ have distinct singular values
  and any of its SVD $Y_0 = (U_0, V_0, \La_0)$.
  We have $\psi(X_0, Y_0) = 0$.
  Moreover, denoting $\iota_V = V_0^* \ze_V \in \RR^{n \times n}$ and $\iota_U = U_0^* \ze_U \in \RR^{n \times n}$,
  for any $z = (z_1, z_2, z_3) \in \RR^{n \times n} \times S_{n} \times S_{n}$, solving $\partial_2 \psi(X_0, Y_0) [\ze_Y] = z$ is equivalent to solving
  \begin{align}
    \iota_V \diag(\La_0) + \diag(\La_0) \iota_U^* + \diag(\ze_\La) &= -V_0^* z_1 U_0 = -\bar{z}_1 \label{eq-proof-svdbis}
  \end{align}
  where $\iota_V + \iota_V^* = z_2$ and
  $\iota_U + \iota_U^* = z_3$.
  Considering $z_2 = 0$ and $z_3 = 0$ shows that $\iota_V$ and $\iota_U$ are antisymmetric.
  In particular they are zero along the diagonal. Thus applying the operator $\diag$ to both sides of \eqref{eq-proof-svdbis} shows
  $\ze_\La = -\diag(V_0^* z_1 U_0)$.
  Now considering the entries $(i,j)$ and $(j,i)$ of the linear system \eqref{eq-proof-svdbis} shows that for any $1 \leq i \leq n$ and $1 \leq j \leq n$
  \begin{align}
    \label{eq-proof-svd-2b2linear-systems}
    \begin{pmatrix}
      {(\La_0)}_j & -{(\La_0)}_i\\
      -{(\La_0)}_i & {(\La_0)}_j
    \end{pmatrix}
    \begin{pmatrix}
      (\iota_V)_{i,j}\\
      (\iota_U)_{i,j}
    \end{pmatrix}
    =
    \begin{pmatrix}
      -({{\bar z}_1})_{i,j}\\
      -({{\bar z}_1})_{j,i}
    \end{pmatrix}.
  \end{align}
  Since for $i \ne j$, ${(\La_0)}_i \ne {(\La_0)}_j$,
  these $2 \times 2$ symmetric linear systems can be solved.
  Then $\partial_2 \psi(X_0, Y_0)$ is invertible
  on $\RR^{n \times n} \times 0_n \times 0_n$
  and for $z = (z_1, 0, 0)$, its inverse is
  \begin{align}
    \label{eq-proof-svd-idiff2}
    \left(\partial_2 \psi(X_0, Y_0)\right)^{-1}[z]
    =
    \left(
    \begin{array}{ccc}
      V_0 \iota_V, &
      U_0 \iota_U, &
      -\diag(V_0^* z_1 U_0)
    \end{array}
    \right)
  \end{align}
  where $\iota_V$ and $\iota_U$ are given by the solutions of
  the above series of $2 \times 2$ symmetric linear systems.

  Since $\Im(\partial_1 \psi(X,Y(X))) \subset \RR^{n \times n} \times 0_n \times 0_n$,
  we can apply the implicit function theorem \cite{RockafellarWets}.
  Hence, for any $X \in \RR^{n \times n}$
  in the neighborhood of $X_0$, there exists a function
  $Y(X) = (U_X, V_X, \La_X)$ such that $\psi(X, Y(X)) = 0$, i.e.~%
  $X$ admits an SVD. Moreover, this function is $C^1$ in the
  neighborhood of $X_0$ and
  its differential is
  \begin{align*}
    \partial Y(X) = - \partial_2 \psi(X,Y(X))^{-1} \circ \partial_1 \psi(X,Y(X)).
  \end{align*}
  Injecting \eqref{eq-proof-svd-diff1} and \eqref{eq-proof-svd-idiff2}
  gives the desired formula by solving \eqref{eq-proof-svd-2b2linear-systems} in closed form.
  Since $X_0$ is any matrix with distinct
  singular values, we can conclude.
\end{proof}

We now turn to the proof of the theorem.

\begin{proof}
  Since the singular values of $\x$ are all distinct,
  by composition of differentiable functions, we can
  derive the relationship \eqref{eq-matrix-valued} that defines $F$
  which gives
  \eq{
    V_X^* \partial F(X)[\de] U_X =
    \iota_V \diag(\Phi(\La_X))
    +
    \diag(\Phi(\La_X)) \iota_U^*
    + \Mm(\La_X)[\bar \de]
  }
  where we have used the notation introduced in Lemma \ref{prop-svd-derivative}.
  Using the expression \eqref{eq-derivative-singulvect} for $\iota_U$ and $\iota_V$ shows that
  the matrix $W=\iota_V \diag(\Phi(\La_X)) + \diag(\Phi(\La_X)) \iota_U^*$ is computed as
  \eq{
    W_{i,j} =
    \frac{1}{\La_j^2-\La_i^2}\pa{
      \phi_j ( \La_j \bar \de_{i,j} + \La_i \bar\de_{j,i} )
      -
      \phi_i ( \La_i \bar \de_{i,j} + \La_j \bar\de_{j,i} )
    }
  }
  where $\phi=\Phi(\La)$.
  Rearranging this expression using the symmetric and anti-symmetric parts shows the desired formula.
\end{proof}

\small
\bibliography{bibliography}
\bibliographystyle{icml2012}

\end{document}